\renewcommand{\today}{
  \ifcase\month\or
  January\or February\or March\or April\or May\or June\or
  July\or August\or September\or October\or November\or December\fi
  \space \number\year}
\newcommand\nmodels{\mathbin{\cancel{\models}}}
\newtheorem{theorem}{Theorem}[section]
\newtheorem{lemma}[theorem]{Lemma}
\theoremstyle{definition}
\newtheorem{definition}[theorem]{Definition}
\newtheorem{example}[theorem]{Example}
\theoremstyle{remark}
\newtheorem{remark}[theorem]{Remark}
\DeclareSymbolFont{AMSb}{U}{msb}{m}{n}
\DeclareMathSymbol{\N}{\mathbin}{AMSb}{"4E}
\DeclareMathSymbol{\Z}{\mathbin}{AMSb}{"5A}
\DeclareMathSymbol{\R}{\mathbin}{AMSb}{"52}
\DeclareMathSymbol{\Q}{\mathbin}{AMSb}{"51}
\DeclareMathSymbol{\I}{\mathbin}{AMSb}{"49}
\DeclareMathSymbol{\C}{\mathbin}{AMSb}{"43}
\def\dotminussym#1#2{%
  \setbox0=\hbox{$\m@th#1-$}%
  \kern.5\wd0%
  \hbox to 0pt{\hss\hbox{$\m@th#1-$}\hss}%
  \raise.8\ht0\hbox to 0pt{\hss$\m@th#1.$\hss}%
  \kern.5\wd0}
\title{Yet Another Generalization of The Notion of a Metric Space}
\author{Seyed Mohammad Amin Khatami}
\address{Department of Computer Science, Birjand University of Technology, Birjand, Iran}
\email{khatami@birjandut.ac.ir, amin\_khatami@aut.ac.ir} \urladdr{}
\author{Madjid Mirzavaziri}
\address{Department of Pure Mathematics, Ferdowsi University of Mashhad, P.O. Box 1159, Mashhad 91775, Iran}
\email{mirzavaziri@gmail.com, mirzavaziri@um.ac.ir} \urladdr{}
\begin{document}
\maketitle
\begin{abstract}
A generalization of the triangle inequality is introduced by a
mapping similar to a t-conorm mapping. This generalization leads us to
a notion for which we use the $\star$-metric terminology. We are interested in the topological space induced by a $\star$-metric. Considering some examples of non-trivial $\star$-metrizable topological spaces, we also study the product topology for a finite family of $\star$-metrizable topological spaces.
\smallskip
\noindent\emph{Keywords:} Metric space, Generalization of metric space, Metric topology
\end{abstract}
\section{Introduction.}
The familiar notion of a metric which seems to be introduced firstly
by the French mathematician Fr\'echet \cite{frechet}, is a mathematics model for distance.
A metric, which is expected as a distance mapping on a nonempty set $M$, is defined by a mapping $d:M^2\to[0,\infty)$ satisfying the following axioms:
\begin{enumerate}[label=(M\arabic*),ref=(M\arabic*),leftmargin=1cm]
\item\label{a1} (identity of indiscernibles) $\forall x\forall y\left(d(x,y)=0\Leftrightarrow x=y\right)$,
\item\label{a2} (symmetry) $\forall x\forall y\left(d(x,y)=d(y,x)\right)$,
\item\label{a3} (triangle inequality) $\forall x\forall y\forall z\left(d(x,y)\le d(x,z)+d(z,y)\right)$.
\end{enumerate}
There are different generalizations of the notion of a metric.
Pseudometric is a generalization in which the distance between two distinct points can be zero \cite{kelley}.
Metametric
is a distance in which identical points do not necessarily have zero distance \cite{meta}.
Quasimetric is defined by omitting the symmetric property of the metric mapping \cite{quasimetric}.
Semimetric is defined by omitting the triangle inequality \cite{semimetric}.
Ultrametric is a metric with the strong triangle inequality
$\forall x\forall y\forall z\left(d(x,y)\le \max\{d(x,z),d(z,y)\}\right)$ \cite{ultrametric}.
Probabilistic metric is a fuzzy generalization of a metric where the distance instead of
non-negative real numbers is defined on distribution functions \cite{menger, sh}.
There are many other extensions of the concept of a metric which have appeared in
literatures (e.g. see \cite{ceder, fuzzymetric, mirza, newmetric1}).

This paper is about a generalization of the notion of a metric, which is called a $\star$-metric, by spreading out the
triangle inequality. We use a symmetric associative nondecreasing continuous function $\star:[0,\infty)^2\to[0,\infty)$ with the boundary condition $a\star 0=a$ called
t-definer to extend the triangle inequality. The function $\star$ is indeed an
extension of a well-known function, namely t-conorm, to the set of non-negative real numbers.
Continuity of $\star$ implies the existence of a dual operator for it, called residua,
which simplify the calculations of $\star$-metric functions such as metric functions.
\section{$\star$-metric.}
Recall that a t-conorm is a symmetric associative binary operator on the closed unit interval
which is nondecreasing on both arguments satisfying $S(x,0)=0$ for all $x\in[0,1]$.
The following definition is an extension of the concept of a t-conorm.
\begin{definition}
A triangular definer or a t-definer is a function $\star:[0,\infty)^2\to[0,\infty)$ satisfying the
following conditions:
\begin{enumerate}[label=(T\arabic*),ref=(T\arabic*),leftmargin=1cm]
  \item\label{t1} $a\star b=b\star a$,
  \item\label{t2} $a\star(b\star c)=(a\star b)\star c$,
  \item\label{t3} $a\le b$ implies $a\star c\le b\star c$ and $c\star a\le c\star b$,
  \item\label{t4} $a\star 0=a$,
  \item\label{t5} $\star$ is continuous in its first component with respect to the Euclidean topology.
\end{enumerate}
\end{definition}
Obviously, because of the commutativity of a t-definer \ref{t1}, its continuity in the first component implies its continuity in the second component. Furthermore, \cite[Proposition 1.19]{kelement} shows that a t-definer is a non-decreasing function \ref{t3}, so the continuity in its first component is equivalent to its continuity.
\begin{definition}
Let $\star$ be a t-definer and $M$ be a nonempty set. A $\star$-metric on $M$
is a function $d:M^2\to[0,\infty)$ satisfies the first two axioms of metric, \ref{a1} and \ref{a2}, together with the $\star$-triangle inequality as follows.
\begin{enumerate}[label=(M3$\star$),ref=(M3$\star$),leftmargin=1.2cm]
\item\label{a4} ($\star$-triangle inequality) $\forall x\forall y\forall z\left(d(x,y)\le d(x,z)\star d(z,y)\right)$.
\end{enumerate}
In this case $(M,d)$ is called a $\star$-metric space. Additionally, if \ref{a1} is
changed in to the weak form
\begin{enumerate}[label=(M1'),ref=(M1'),leftmargin=1.15cm]
\item\label{a5} $\forall x\forall y\left(x=y \rightarrow d(x,y)=0\right)$,
\end{enumerate}
then $\star$ is called a $\star$-pseudometric.
\end{definition}
\begin{example}\label{exa}
The most important continuous t-conorms are \lo, Maximum, and Product t-conorms which are described by
\begin{center}
$S_L(a,b)=\min\{a+b,1\}$, $S_m(a,b)=\max\{a,b\}$, and
$S_\pi(a,b)=a+b-a.b$.
\end{center}
But a t-conorm is defined on the closed unit interval while a t-definer is defined
on non-negative real numbers. The most important t-definers are:
\begin{itemize}[leftmargin=0.5cm]
  \item \lo t-definer: $a\star_L b=a+b$,
  \item Maximum t-definer : $a\star_m b=\max\{a,b\}$.
\end{itemize}
Obviously, an $\star_L$-metric is actually a metric and
an $\star_m$-metric is an ultrametric.
\end{example}
The following example shows that there are $\star$-metrics which are not metric.
\begin{example}\label{notmetr}
Clearly $a\star_{p} b=(\sqrt{a}+\sqrt{b})^2$ is a t-definer.
The function $d(a,b)=(\sqrt{a}-\sqrt{b})^2$ forms an $\star_p$-metric on $[0,\infty)$ which
is not a metric. Indeed,
\begin{eqnarray*}
d(a,b)&=&\left(\sqrt{a}-\sqrt{b}\right)^2\\
&=&\left(\sqrt{a}-\sqrt{c}+\sqrt{c}-\sqrt{b}\right)^2\\
&\le&\Big(\sqrt{(\sqrt{a}-\sqrt{c})^2}+\sqrt{(\sqrt{c}-\sqrt{b})^2}\Big)^2\\
&=&\Big(\sqrt{d(a,c)}+\sqrt{d(c,b)}\Big)^2\\
&=&d(a,c)\star_p d(c,b),
\end{eqnarray*}
while $d(1,25)=16\nleq d(1,16)+d(16,25)=9+1$.
\end{example}
Note that one of the reasons that in Example \ref{notmetr} the function $d$ does not form
a metric is that $a\star_p b\nleq a+b$. The following definition describes a partial order
between t-definers.
\begin{definition}
Assume that $\star_1$ and $\star_2$ are two t-definers.
If the inequality $a\star_1 b\le a\star_2 b$ holds for all $a,b\ge 0$,
then $\star_1$ is called weaker than $\star_2$ (or $\star_2$ is called stronger than $\star_1$) and denoted by $\star_1\le \star_2$.
\end{definition}
\begin{remark}
The Maximum t-definer $\star_m$ is the weakest t-definer. Let $\star$ be an arbitrary t-definer and
$a,b\ge 0$. Since $a\ge 0$ we have $a\star b\ge b\star 0=b$. Similarly $a\star b\ge a$. So,
$a\star b\ge \max\{a,b\}$. It seems that the strongest t-definer can not be specified.
\end{remark}
\begin{example}\label{tdefinerexa}
Consider t-definers in Example \ref{exa} and \ref{notmetr}. Furthermore let $\star_s$ be defined
by $a\star_{s} b=\sqrt{a^2+b^2}$. Then we have
\begin{center}
$\max\{a,b\} \le a\star_s b\le a+b \le a\star_p b$.
\end{center}
\end{example}
\begin{remark}
Clearly, for any two t-definers $\star_1$ and $\star_2$, if
$\star_1\le\star_2$ then any $\star_1$-metric space is a $\star_2$-metric space.
In particular, an ultrametric space is a $\star$-metric space for any t-definer $\star$.
\end{remark}
The definition of the residuum of a t-conorm is the key point that we use t-conorm for
introducing the concept of t-definer. The residuum of a t-definer plays a role
such as the role of minus operator for addition operator.
\begin{definition}
Let $\star$ be a t-definer. The residuum of $\star$ is defined by
\begin{center}
$a\dotto b=\inf\{c: c\star a\ge b\}$.
\end{center}
Note that $b\in\{c: c\star a\ge b\}$ and therefore $\dotto$ uniquely exists. Furthermore,
for any $a, b, c\in[0,\infty)$,
\begin{equation}\label{residum}
c\ge a\dotto b \text{ if and only if }c\star a\ge b,
\end{equation}
which is called the residuation property of $\star$ and $\dotto$.
\end{definition}
\begin{lemma}\label{property}
Let $\star$ be a t-definer and $\dotto$ be its residuum. Then
\begin{enumerate}[leftmargin=0.5cm]
\item $a\dotto b=\min\{c: c\star a\ge b\}$,
\item $0\dotto a=a$,
\item $a\dotto b=0$ if and only if $a\ge b$,
\item $a\star(a\dotto b)=\max\{a,b\}$,
\item $a\dotto b\ge (a\dotto c)\dotto (c\dotto b)$.
\item $a\dotto b\le (a\dotto c)\star (c\dotto b)$.
\end{enumerate}
\end{lemma}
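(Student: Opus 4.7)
My plan is to derive the six items sequentially, each building on its predecessors, with the residuation property \eqref{residum} doing most of the work and the continuity of $\star$ (axiom \ref{t5}) used to promote the infimum to a minimum. The backbone is item~(1): the set $\{c:c\star a\ge b\}$ is the preimage of the closed ray $[b,\infty)$ under the continuous map $c\mapsto c\star a$, hence closed; it is nonempty (since $b\star a\ge b\star 0=b$ by \ref{t3} and \ref{t4}) and bounded below by $0$, so the infimum is attained.

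Items~(2) and~(3) then fall out in one line apiece. For (2), \ref{t4} reduces $c\star 0\ge a$ to $c\ge a$, whose minimum is $a$. For (3), by (1) we have $a\dotto b=0$ iff $0\star a=a\ge b$.

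For (4), I would prove the two inequalities separately. The lower bound $a\star(a\dotto b)\ge\max\{a,b\}$ combines monotonicity ($a\star(a\dotto b)\ge a\star 0=a$) with the defining condition $(a\dotto b)\star a\ge b$ from (1). For the reverse, split into cases: if $a\ge b$ then (3) gives $a\dotto b=0$ and hence $a\star 0=a=\max\{a,b\}$; if $a<b$, then any $c'<a\dotto b$ fails the defining condition by the minimality in (1), so $c'\star a<b$, and the continuity of $\star$ lets me pass to the limit $c'\to (a\dotto b)^{-}$ to get $(a\dotto b)\star a\le b=\max\{a,b\}$.

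Items~(5) and~(6) should both follow from iterated applications of \eqref{residum}. For (6), the inequality $(a\dotto c)\star(c\dotto b)\ge a\dotto b$ is equivalent by residuation to $((a\dotto c)\star(c\dotto b))\star a\ge b$; after re-associating, item~(4) applied twice finishes things—first $(a\dotto c)\star a=\max\{a,c\}\ge c$, then $(c\dotto b)\star\max\{a,c\}\ge(c\dotto b)\star c=\max\{c,b\}\ge b$ by monotonicity. I expect (5) to be the main obstacle: the parallel plan, unwinding $(a\dotto c)\dotto(c\dotto b)\le a\dotto b$ through residuation twice, reduces to showing $(a\dotto b)\star(a\dotto c)\star c\ge b$, which does not drop out of item~(4) as cleanly as (6) does. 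Regrouping the three factors so that either $a\star(a\dotto b)=\max\{a,b\}$ or $a\star(a\dotto c)=\max\{a,c\}$ can be brought to bear is the step I would want to see executed with care.
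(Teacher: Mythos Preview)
Your treatment of items (1)--(4) and (6) is correct and essentially coincides with the paper's. The differences are cosmetic: for (1) the paper chooses a sequence $c_n\searrow\inf A$ and passes to the limit rather than invoking closedness of a preimage; for the case $a<b$ of (4) the paper applies the intermediate value theorem to $f(c)=a\star c$ to locate $c$ with $a\star c=b$, whereas you take a one-sided limit; and for (6) the paper deduces the inequality from (5) instead of arguing directly via residuation and (4) as you do.

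Your hesitation at (5) is well founded---in fact the obstacle is insurmountable. The inequality you reduce to, $(a\dotto b)\star(a\dotto c)\star c\ge b$, is false: with the {\L}ukasiewicz t-definer $\star=+$ and $a=5$, $b=3$, $c=1$ one gets $0+0+1=1<3$. For the same values item~(5) as printed reads $0\ge 2$, so the statement itself is wrong. No regrouping can bring $a\star(a\dotto b)$ or $a\star(a\dotto c)$ into play, simply because there is no factor $a$ in the product.

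What the paper's proof of (5) actually establishes is the variant
\[
a\dotto b\ \ge\ (b\dotto c)\dotto(a\dotto c):
\]
one shows $(a\dotto b)\star(b\dotto c)\star a\ge c$ by two applications of (4) (first $a\star(a\dotto b)\ge b$, then $b\star(b\dotto c)\ge c$), and then applies residuation twice. Here the factor $a$ \emph{is} present, so (4) fires exactly as in your proof of (6). Your instinct that the strategy for (6) should transfer was therefore correct; the mismatch is between the variable labelling in the stated (5) and the inequality the argument actually delivers.
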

\begin{proof}
{\em (1)} Let $A=\{c: c\star a\ge b\}$ and $\alpha=\inf A$. So there exists a non-increasing sequence
$\{c_n\}\subseteq A$ such that $\lim c_n=\alpha$. Now, continuity of $\star$ in its
first component implies that
\begin{center}
$\alpha\star a=(\lim c_n) \star a=lim(c_n\star a)\ge b$.
\end{center}
So $\alpha\in A$ that is $\alpha=\min A$.

{\em (2)} $0\dotto a=\min\{c: c\star 0\ge a\}=\min\{c: c\ge a\}=a$.

{\em (3)} If $a\ge b$ then $a\star 0=a\ge b$. So $0\in\{c: c\star a\ge b\}$. Therefore $a\dotto b=0$. Conversely if $a\dotto b=0$ then $\min\{c: c\star a\ge b\}=0$ that is $a=0\star a\ge b$.

{\em (4)} If $a\ge b$, then by {\em (3)} $a\dotto b=0$ and therefore
$a\star(a\dotto b)=a\star 0=a$. If $a\le b$, then since $\star$ is a continuous function,
\begin{center}
$a\star(a\dotto b)=a\star \min\{c: c\star a\ge b\}=\min\{a\star c: a\star c\ge b\}$.
\end{center}
Now, taking the continuous function $f(c)=a\star c$ we have
\begin{center}
$f(0)=a\le b=b\star 0\le b\star a=f(b)$,
\end{center}
therefore by the intermediate value theorem there exists some $c\in[0,b]$ for which
$f(c)=b$. So, $\min\{a\star c: b\le a\star c\}=b$ that is $a\star(a\dotto b)=b$.

{\em (5)} Since $\star$ is commutative and associative, by {\em (4)} we get
\begin{eqnarray*}
  (a\dotto b)\star (b\dotto c)\star a &=& a\star (a\dotto b)\star (b\dotto c)\\
  &\ge& b\star (b\dotto c)\\
  &\ge& c
\end{eqnarray*}
Now, using the residuation property \ref{residum} two times on $(a\dotto b)\star (b\dotto c)\star a \ge c$ we get {\em (5)}.

{\em (6)} By {\em (4)} $a\dotto c\ge (c\dotto b)\dotto (a\dotto b)$. So, residuation of $\star$ and $\dotto$ fulfills the proof.
\end{proof}
\begin{example}\label{exa1}
Let's consider t-definers in Example \ref{tdefinerexa}. If $a\ge b$ then $a\dotto b=0$ and if $a<b$ then
\begin{itemize}[leftmargin=0.5cm]
\item[] for \lo t-definer "$+$": $a\dotto b=b-a$,
\item[] for Maximum t-definer: $a\dotto b=b$,
\item[] for $\star_s$: $a\dotto b=\sqrt{b^2-a^2}$,
\item[] for $\star_p$: $a\dotto b=(\sqrt{b}-\sqrt{a})^2$.
\end{itemize}
\end{example}
\begin{remark}\label{not}
Let $\star$ be a t-definer and $\dotto$ be its residuum. Define $d:[0,\infty)^2\to [0,\infty)$ by
$d(a,b)=(a\dotto b)\star (b\dotto a)$. Obviously, $d$ satisfies \ref{a2}.
Furthermore, Lemma \ref{property} (\em {3}) implies that $d$ satisfies \ref{a1} and
Lemma \ref{property} (\em {6}) implies that $d$ satisfies \ref{a4},
\begin{eqnarray*}
  d(a,b) &=& (a\dotto b)\star (b\dotto a) \\
  &\le& (a\dotto c)\star (c\dotto b)\star (b\dotto c)\star (c\dotto a)\\
  &=& (a\dotto c)\star (c\dotto a)\star (b\dotto c)\star (c\dotto b)\\
  &=& d(a,c)\star d(c,b).
\end{eqnarray*}
So, $d$ forms a $\star$-metric
on $[0,\infty)$. The induced $\star$-metrics of t-definers in Example \ref{tdefinerexa} are as follows
\begin{itemize}[leftmargin=0.5cm]
\item[] $d_L(a,b)=|b-a|$ forms an $\star_L$-metric on $[0,\infty)$,
\item[] $d_{max}(a,b)=\left\{
\begin{array}{ll}
0&a=b\\
\max\{a,b\}&a\ne b
\end{array}
\right.$ forms an $\star_m$-metric on $[0,\infty)$,
\item[] $d_s(a,b)=\sqrt{|b^2-a^2|}$ forms an $\star_s$-metric on $[0,\infty)$,
\item[] $d_p(a,b)=|\sqrt{b}-\sqrt{a}|^2$ forms an $\star_p$-metric on $[0,\infty)$.
\end{itemize}
Note that $d_L$ also defines a $\star_L$-metric (or a metric) on $\mathbb{R}$. Similarly,
$d_s$ also defines a $\star_s$-metric on $\mathbb{R}$.
\end{remark}

\section{Topology of $\star$-metric.}
In this section, we extend some topological concepts of metric spaces to $\star$-metric spaces.
\begin{definition}\label{openball}
Assume that $(M,d)$ is a $\star$-metric space. For any $a\in M$ and $r>0$,
the ``open ball around $a$ of radius $r$" is the set
\begin{center}
$N_r(a)=\{b: d(a,b)<r\}$.
\end{center}
For a subset $A$ of $M$, a point $x\in A$ is called an ``interior point" of $A$ if there
exists $\epsilon >0$ such that $N_\epsilon(x)\subset A$. $A$ is said to be
an ``open" subset of $M$ whenever any point of $A$ is an interior point.
\end{definition}
The following theorem shows that the set of all open subsets of a $\star$-metric space
$(M,d)$ forms a topology on $M$ called the $\star$-metric topology.
\begin{theorem}
For every $\star$-metric space $(M,d)$, the set of all open subsets of $M$ forms a Hausdorff topology on $M$, denoted by $\tau_d$.
\end{theorem}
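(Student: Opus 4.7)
The plan is to verify the three axioms of a topology (closure under arbitrary unions, finite intersections, and containing $\emptyset$ and $M$), and then to check the Hausdorff property by separating distinct points with disjoint open balls. Closure under arbitrary unions is immediate from the definition of ``interior point,'' and $\emptyset, M$ are trivially open, so the content lies in (a) closure under finite intersections and (b) separation. Both reduce to the same technical fact: open balls are open.

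The key lemma I would prove first is that for every $a\in M$, $r>0$, and $x\in N_r(a)$, there is some $\epsilon>0$ with $N_\epsilon(x)\subseteq N_r(a)$. Set $s=d(a,x)<r$. Since $s\star 0=s<r$ by \ref{t4}, the continuity of $\star$ in its second component (which follows from \ref{t1} and \ref{t5}) gives some $\epsilon>0$ such that $s\star\epsilon<r$. For any $y\in N_\epsilon(x)$, the $\star$-triangle inequality \ref{a4} combined with the monotonicity \ref{t3} yields
\[
d(a,y)\le d(a,x)\star d(x,y)\le s\star\epsilon<r,
\]
so $y\in N_r(a)$. Finite intersections are then standard: if $x\in U\cap V$ with $U,V$ open, choose $N_{\epsilon_1}(x)\subseteq U$ and $N_{\epsilon_2}(x)\subseteq V$ and take $\epsilon=\min\{\epsilon_1,\epsilon_2\}$.

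For the Hausdorff property, given $x\neq y$ with $r=d(x,y)>0$ (by \ref{a1}), I would produce $\epsilon>0$ satisfying $\epsilon\star\epsilon<r$ and verify that $N_\epsilon(x)\cap N_\epsilon(y)=\emptyset$. Existence of such an $\epsilon$ again uses continuity of $\star$ at $(0,0)$: the function $\epsilon\mapsto\epsilon\star\epsilon$ is continuous and vanishes at $0$, hence takes values below $r$ for sufficiently small $\epsilon$. Disjointness follows because any $z$ in the intersection would satisfy, via \ref{a2}, \ref{a4}, and \ref{t3},
\[
r=d(x,y)\le d(x,z)\star d(z,y)\le \epsilon\star\epsilon<r,
\]
a contradiction.

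The main obstacle is the careful use of the continuity axiom \ref{t5}: unlike the classical metric case where one simply halves $\epsilon$, here one must invoke continuity (and the boundary condition $a\star 0=a$) to extract an $\epsilon$ making $s\star\epsilon$ strictly less than $r$, and similarly an $\epsilon$ making $\epsilon\star\epsilon$ arbitrarily small. Once this continuity argument is in hand, the remainder of the proof parallels the classical metric case verbatim.
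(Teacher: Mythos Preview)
Your proof is correct and follows the same overall architecture as the paper's: verify the topology axioms directly from the definition of interior point, then separate distinct points by finding $\epsilon$ with $\epsilon\star\epsilon<d(x,y)$ via continuity of $\star$ at $(0,0)$. The finite-intersection and Hausdorff arguments match the paper's essentially verbatim.

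There is one genuine difference worth noting. You fold the lemma ``open balls are open'' into this proof and obtain the inner radius by continuity: from $s\star 0=s<r$ you extract $\epsilon>0$ with $s\star\epsilon<r$. The paper instead postpones this to a separate lemma immediately afterward and produces the radius \emph{explicitly} via the residuum, setting $\epsilon=d(a,x)\dotto r$ and invoking the residuation property to get $d(a,x)\star\epsilon\ge r$ turned around. Your route is more elementary (it avoids the residuum machinery entirely) and is arguably cleaner for this statement, since the Hausdorff clause really does need open balls to be open and the paper's ordering leaves that implicit. The paper's route, on the other hand, gives a concrete formula for the radius, which is occasionally useful downstream. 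A small expository quibble: your claim that finite intersections ``reduce to'' open balls being open overstates things---your own intersection argument uses only the definition of open set and the min of two radii, exactly as the paper does, and does not invoke the lemma.
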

\begin{proof}
Let $\tau_d=\{A\subseteq M: A~\text{is an open subset of}~M\}$. Obviously $\emptyset, M\in \tau_d$. Assume that
$A,B \in \tau_d$. Since $\emptyset\in \tau_d$, if $A\cap B=\emptyset$ there is nothing to prove. So, assume that
$A\cap B\ne\emptyset$. We indicate that any point $a\in A\cap B$ is an interior point of $A\cap B$. Since $A$ and $B$
are open sets, $a$ is an interior point of $A$ and $B$. So there exists $r>0$ and $s>0$ such that
$N_r(a)\subseteq A$ and $N_s(B)\subseteq B$. If we set $t=\min\{r,s\}$, then
$N_t(a)\subseteq N_r(a)\cap N_s(a)\subseteq A\cap B$ and so $a$ is an interior point of $A\cap B$.
On the other hand, an easy argument shows that the union of arbitrary family of open sets is open.

Now consider two distinct points $a, b\in M$. There
exists $s>$ such that $s\star s<d(a,b)$. Indeed, otherwise $d(a,b)<s\star s$ for any $s>0$ and therefore
continuity of $\star$ implies that $d(a,b)=0$ which is a contradiction. Now, we show
$N_s(a)\cap N_s(b)=\emptyset$ which completes the proof. To this end, if
there exists some $c\in N_s(a)\cap N_s(b)$ then we get the following contradiction:
\begin{center}
$d(a,b)\le d(a,c)\star d(c,b)< s\star s<d(a,b)$.
\end{center}
\end{proof}
The notions and concepts of topological spaces such as ``closed set", ``interior and closure of a set",
``limit point and the set of limit points of a set", ``continuous function", and so forth are defined as usual (e.g. see \cite{mun} or \cite{kelley}).

The following theorem shows that in $\star$-metric spaces, open balls are open sets .
\begin{lemma}\label{open}
In every $\star$-metric space $(M,d)$, open balls are open sets.
\end{lemma}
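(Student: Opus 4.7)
The plan is to mimic the classical metric-space argument, but with $r - d(a,b)$ replaced by a quantity $\epsilon > 0$ obtained from the continuity of $\star$. Fix an open ball $N_r(a)$ and an arbitrary point $b \in N_r(a)$; the goal is to produce $\epsilon > 0$ with $N_\epsilon(b) \subseteq N_r(a)$.

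First I would exploit axiom \ref{t4} and axiom \ref{t5}. Since $d(a,b) < r$ and $d(a,b) \star 0 = d(a,b)$ by \ref{t4}, continuity of the map $t \mapsto d(a,b) \star t$ (which follows from \ref{t5} together with commutativity \ref{t1}) at $t = 0$ gives some $\epsilon > 0$ with $d(a,b) \star \epsilon < r$. This is the step that replaces ordinary subtraction; alternatively one could set $\epsilon_0 = d(a,b) \dotto r$, note $\epsilon_0 > 0$ by Lemma \ref{property}(3) since $d(a,b) < r$, observe that $d(a,b) \star \epsilon_0 = \max\{d(a,b), r\} = r$ by Lemma \ref{property}(4), and then shrink $\epsilon_0$ slightly using continuity to obtain the required strict inequality.

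Next I would verify the inclusion $N_\epsilon(b) \subseteq N_r(a)$. Take any $c \in N_\epsilon(b)$, so $d(b,c) < \epsilon$. By the $\star$-triangle inequality \ref{a4} and the monotonicity axiom \ref{t3},
\begin{equation*}
d(a,c) \le d(a,b) \star d(b,c) \le d(a,b) \star \epsilon < r,
\end{equation*}
which shows $c \in N_r(a)$. Hence every point of $N_r(a)$ is an interior point, and $N_r(a)$ is open.

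The only genuine obstacle is the first step, the extraction of a usable $\epsilon$: without the usual additive structure one has to lean on \ref{t5} (or on the residuum machinery of Lemma \ref{property}) to say that the value $d(a,b) \star \epsilon$ can be made strictly smaller than $r$ by shrinking $\epsilon$. Once that is in hand, \ref{t3} and \ref{a4} finish the argument routinely.
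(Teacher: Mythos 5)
Your proposal is correct, and its overall skeleton (pick a suitable $\epsilon>0$, then use \ref{a4} and monotonicity \ref{t3} to get $d(a,c)\le d(a,b)\star d(b,c)<r$) matches the paper's proof exactly. The one point where you diverge is in how $\epsilon$ is produced. The paper takes $\epsilon=d(a,b)\dotto r$ at full value and never shrinks it: the key observation is the strict form of the residuation property \eqref{residum}, namely that $c<a\dotto b$ is equivalent to $c\star a<b$, so $d(b,c)<\epsilon$ immediately yields $d(b,c)\star d(a,b)<r$ with no further work. Your primary route instead invokes continuity of $t\mapsto d(a,b)\star t$ at $t=0$ to find some $\epsilon$ with $d(a,b)\star\epsilon<r$; this is perfectly valid and arguably more elementary, since it bypasses Lemma \ref{property} entirely, at the cost of producing a non-canonical radius. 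Your alternative route via $\epsilon_0=d(a,b)\dotto r$ is essentially the paper's, except that the ``shrink $\epsilon_0$ slightly'' step is unnecessary: you do not need $d(a,b)\star\epsilon_0<r$, only that $d(b,c)<\epsilon_0$ forces $d(b,c)\star d(a,b)<r$, and that is exactly what the residuation equivalence already gives. So both of your variants work; the paper's is the cleaner of the two because the residuum hands you the optimal $\epsilon$ together with the strict inequality for free.
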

\begin{proof}
Assume that $\star$ is a t-definer, $\dotto$ is the residuum of $\star$,
$(X,d)$ is a $\star$-metric space, $x\in M$, and $r>0$. We show that every $y\in N_r(x)$ is
an interior point of $N_r(x)$. To this end for $\epsilon=d(x,y)\dotto r$, we show that
$N_\epsilon(y)\subseteq N_r(x)$. For this consider $z\in N_\epsilon(y)$. So ,
$d(z,y)<\epsilon$ that is $d(z,y) <d(x,y)\dotto r$.
Now the residuation of $\dotto$ and $\star$ implies that
$d(z,y)\star d(x,y)< r$ and so by the $\star$-triangle inequality and symmetric
property of $\star$ and $d$ we have
$d(x,z)\le d(x,y) \star d(y,z)<r$ which show that $z\in N_r(x)$.
\end{proof}
Now, by Definition \ref{openball} and Lemma \ref{open}, for a
$\star$-metric space $(M,d)$ the set, $\mathfrak{B}_d=\{N_r(a): a\in M \text{ and } r>0\}$ is a base for the induced topology of $d$ on $M$ which is called the open ball base of $\tau_d$.
\begin{theorem}
Every $\star$-metric space $(M,d)$ is first countable.
\end{theorem}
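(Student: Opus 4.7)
The plan is to prove first countability by exhibiting, at each point $a \in M$, an explicit countable neighborhood base: the family $\mathcal{B}_a = \{N_{1/n}(a) : n \in \mathbb{N}\}$. By Lemma \ref{open} each $N_{1/n}(a)$ is an open subset of $M$, and trivially $a \in N_{1/n}(a)$, so every member of $\mathcal{B}_a$ is an open neighborhood of $a$. Since $\mathcal{B}_a$ is indexed by $\mathbb{N}$, it is countable by construction.

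The remaining task is to show that $\mathcal{B}_a$ is cofinal in the neighborhood filter of $a$. I would start with an arbitrary open set $U \subseteq M$ containing $a$; the ``interior point'' clause of Definition \ref{openball} then supplies some $\epsilon > 0$ with $N_\epsilon(a) \subseteq U$. Using the Archimedean property of $\mathbb{R}$ I pick $n \in \mathbb{N}$ with $1/n < \epsilon$, after which the inclusion $N_{1/n}(a) \subseteq N_\epsilon(a) \subseteq U$ is immediate: any $b$ with $d(a,b) < 1/n$ automatically satisfies $d(a,b) < \epsilon$, straight from the definition of an open ball and the transitivity of ``$<$'' on $[0,\infty)$.

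I do not anticipate any substantive obstacle. The $\star$-metric hypothesis enters the argument only through Lemma \ref{open}, which is what guarantees that the basic balls are honestly open sets in the topology $\tau_d$; once that is in hand, no appeal to the $\star$-triangle inequality or to the residuum $\dotto$ is needed, because the cofinality step is purely an Archimedean remark about the real numbers. The proof is therefore completely parallel to the classical metric-space case.
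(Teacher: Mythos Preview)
Your proposal is correct and follows exactly the paper's approach: take $\mathcal{B}_a=\{N_{1/n}(a):n\in\mathbb{N}\}$, invoke Lemma~\ref{open} to see these are open, and use the Archimedean property for cofinality. If anything, your write-up is more explicit than the paper's, which states the family $U_n=N_{1/n}(a)$, cites Lemma~\ref{open}, and then simply declares the proof complete without spelling out the Archimedean step.
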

\begin{proof}
Let $a$ be an arbitrary point of $M$. We must show that there exists a countable family
$\{U_n\}_{n\in\mathbb{N}}$ of neighbourhoods of $a$ such that every neighbourhood of $a$
contain at least one of $U_n$s. To this end for any $n\in\mathbb{N}$
set $U_n=N_{1/n}(a)$. By Lemma \ref{open} any $U_n$ is a neighbourhood of $a$ and the proof is complete.
\end{proof}
\begin{theorem}
Every $\star$-metric space $(M,d)$ is Normal.
\end{theorem}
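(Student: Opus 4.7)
The plan is to adapt the standard proof that metric spaces are normal, replacing the operation of ``halving'' a radius by the following fact already used in the Hausdorff portion of the topology theorem: for every $r>0$ there exists $s>0$ with $s\star s<r$. This follows from continuity of $\star$ together with the boundary condition \ref{t4}, which gives $0\star 0=0$.

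Let $A,B\subseteq M$ be disjoint closed sets. For each $a\in A$, since $M\setminus B$ is open and contains $a$, I would pick $r_a>0$ with $N_{r_a}(a)\cap B=\emptyset$, and then $s_a>0$ with $s_a\star s_a<r_a$. Symmetrically, for each $b\in B$, pick $r_b,s_b>0$ with $N_{r_b}(b)\cap A=\emptyset$ and $s_b\star s_b<r_b$. Set
\[
U=\bigcup_{a\in A}N_{s_a}(a),\qquad V=\bigcup_{b\in B}N_{s_b}(b).
\]
By Lemma \ref{open} each open ball is open, hence $U$ and $V$ are open; clearly $A\subseteq U$ and $B\subseteq V$.

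The heart of the argument is disjointness of $U$ and $V$. Suppose toward a contradiction that there is $x\in U\cap V$, with $x\in N_{s_a}(a)$ and $x\in N_{s_b}(b)$ for some $a\in A$ and $b\in B$; without loss of generality $s_a\ge s_b$. Using the $\star$-triangle inequality \ref{a4} and monotonicity \ref{t3} of $\star$, I would chain
\[
d(a,b)\le d(a,x)\star d(x,b)\le s_a\star s_b\le s_a\star s_a<r_a,
\]
placing $b$ in $N_{r_a}(a)\cap B$, contradicting the choice of $r_a$. The only delicate point is the bookkeeping with inequalities: monotonicity \ref{t3} provides only non-strict bounds in the middle of the chain, so it is the strict inequality $s_a\star s_a<r_a$ guaranteed by the choice of $s_a$ that finally delivers $d(a,b)<r_a$. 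Beyond that, the proof is a direct transcription of the classical metric argument.
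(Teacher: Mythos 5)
Your proof is correct and follows essentially the same route as the paper's: shrink the radii via $s\star s<r$, take the unions of the shrunken balls, and derive a contradiction from the $\star$-triangle inequality with the WLOG comparison of $s_a$ and $s_b$. If anything, your bookkeeping of strict versus non-strict inequalities in the final chain is slightly more careful than the paper's.
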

\begin{proof}
The proof is similar to the one for metric spaces (e.g. see \cite[Theorem 32.2]{mun}).
Let $A$ and $B$ be two closed subset of $M$. Since $B$ is a closed subset of $M$,
for any $a\in A$ let $N_{r_a}(a)$ be an open ball such that $N_{r_a}(a)\cap B=\emptyset$. Similarly, for any $b\in B$ the closeness of $A$ implies that one
could find $N_{r_b}(b)$ such that $N_{r_b}(b)\cap A=\emptyset$. Now, for
any $a\in A$ and $b\in B$ assume that $s_a$ and $s_b$ are such that
$s_a\star s_a<r_a$ and $s_b\star s_b<r_b$, respectively. Set,
\begin{center}
$U=\displaystyle\bigcup_{a\in A}N_{s_a}(a)$ and $V=\displaystyle\bigcup_{b\in B}N_{s_b}(b)$.
\end{center}
$U$ and $V$ are open sets containing $A$ and $B$ respectively.
Furthermore, we claim that $U\cap V=\emptyset$.
Indeed, if $c\in U\cap V$ then there exists $a\in A$ and $b\in B$ such that
$c\in N_{s_a}(a)\cap N_{s_b}(b)$ and therefore
$d(a,b)\le d(a,c)\star d(c,b)<s_a\star s_b$. Now, without loss of generality
we could assume that $s_b\le s_a$. So, $d(a,b)< s_a\star s_a<r_a$ which means
that $b\in N_{r_a}(a)$, a contradiction.
\end{proof}
\section{Product topology for $\star$-metric.}
Recall that for a family $\{(X_i,\tau_i)\}_{i\in I}$ of topological spaces, the product topology is
the weakest topology on $X=\prod_{i\in I}X_i$ which makes all of the projection
maps $\{\pi_i:X \to X_i\}_{\i\in I}$ continuous. Keep in mind that
$\{\prod_{i\in I}U_i: U_i$ is open in $X_i$ and $U_i\ne X_i$ for only finitely many $i\}$
is a base for the product topology on $X$. Furthermore, if for each $i\in I$ the topology on $X_i$ is given by a basis $\mathfrak{B}_i$ then
$\{\prod_{i\in I}B_i: B_i\in\mathfrak{B}_i$ and $B_i\ne X_i$ for only finitely many $i\}$
form a basis for product topology on $X$.
\begin{remark}\label{productmetric}
If $\{(M_i,d_i)\}_{i=1}^n$ be a finite family of metric spaces, then the
product topology on $M=\prod_{i=1}^n M_i$ is the same as the induced topology of
the following three significant metrics on $M=\prod_{i=1}^n M_i$
(e.g. see \cite[Theorem 4.5.1]{metricspace}).
\begin{itemize}[leftmargin=0.5cm]
  \item (Maximum metric) $d_{max}(\bar{x},\bar{y})=\max_{1\le i\le n}d_i(x_i,y_i)$,
  \item (Euclidean product metric) $d_E(\bar{x},\bar{y})=\sqrt{\sum_{i=1}^n d_i(x_i,y_i)^2}$,
  \item (Taxicab metric) $d_T(\bar{x},\bar{y})=\sum_{i=1}^n d_i(x_i,y_i)$,
\end{itemize}
The coming figure describes these three metrics and their corresponding open balls on $\mathbb{R}^2$ more precisely.
\begin{figure}[h!]
  \centering
  \begin{subfigure}[b]{0.59\linewidth}
    \includegraphics[width=\linewidth]{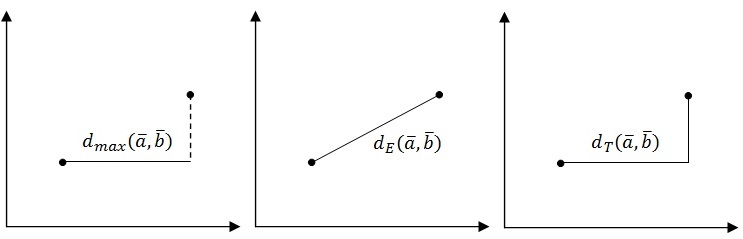}
  \end{subfigure}
  \begin{subfigure}[b]{0.4\linewidth}
    \includegraphics[width=\linewidth]{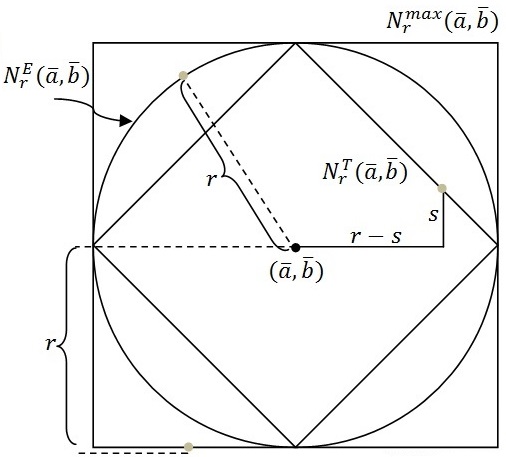}
  \end{subfigure}
  \caption{$d_{max}$, $d_E$, and $d_T$ on $\mathbb{R}^2$}\label{prod1}
\end{figure}
\end{remark}
\begin{definition}
For a $\star$-metric $d$ on $M$, the $\star$-product topology on $M^n$ is the product topology induced by the $\star$-metric topology of $M$.
\end{definition}
The following theorem demonstrates a situation similar to that of Remark \ref{productmetric} for $\star$-metric spaces.
\begin{theorem}
Let $\{(M_i,d_i)\}_{i=1}^n$ be a family of $\star$-metric spaces. Assume that $M=\prod_{1\le i\le n}M_i$ and define $d_{max}$ and $d_T$ by
\begin{itemize}
\item[] $d_{max}(\bar{x},\bar{y})=\max_{1\le i\le n}d_i(x_i,y_i)$,
\item[] $d_T(\bar{x},\bar{y})=d_1(x_1,y_1)\star d_2(x_2,y_2)\star...\star d_n(x_n,y_n)$.
\end{itemize}
Then $d_{max}$ and $d_T$ define $\star$-metrics on $M$. Furthermore the induced topology
of these two metrics on $M$ is the same as the product topology on $M$.
\end{theorem}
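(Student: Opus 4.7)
The plan is to split the proof into two independent parts: first verify that both $d_{max}$ and $d_T$ satisfy the $\star$-metric axioms, then show that both induce the product topology on $M$. Axioms \ref{a1} and \ref{a2} transfer directly from the corresponding properties of each $d_i$. For the $\star$-triangle inequality of $d_{max}$, I would combine the componentwise $\star$-triangles with monotonicity \ref{t3}: since $d_i(x_i,z_i)\le d_{max}(\bar x,\bar z)$ and $d_i(z_i,y_i)\le d_{max}(\bar z,\bar y)$, one has $d_i(x_i,y_i)\le d_i(x_i,z_i)\star d_i(z_i,y_i)\le d_{max}(\bar x,\bar z)\star d_{max}(\bar z,\bar y)$ for each $i$, so taking the maximum on the left preserves the bound. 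For $d_T$, apply the componentwise $\star$-triangle to each factor of the iterated $\star$-product and reorder using commutativity \ref{t1} and associativity \ref{t2}.

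For the topological part, the key observation is that a $d_{max}$-ball is a product of $d_i$-balls of equal radius: $N_r^{d_{max}}(\bar a)=\prod_{i=1}^{n} N_r^{d_i}(a_i)$. Hence every $d_{max}$-ball is basic product-open, and conversely, given a basic product-open neighborhood $\prod_i U_i$ of some $\bar b$, choosing $\epsilon_i>0$ with $N_{\epsilon_i}^{d_i}(b_i)\subseteq U_i$ and setting $\epsilon=\min_i\epsilon_i$ gives a $d_{max}$-ball around $\bar b$ inside $\prod_i U_i$. This identifies $\tau_{d_{max}}$ with the product topology. To compare $d_T$ with $d_{max}$, the inequality $\max\{a,b\}\le a\star b$ (from the remark after Example \ref{tdefinerexa}) iterates to $d_{max}\le d_T$ pointwise, so $N_r^{d_T}(\bar a)\subseteq N_r^{d_{max}}(\bar a)$ and the inclusion $\tau_{d_{max}}\subseteq\tau_{d_T}$ follows at once.

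The only nontrivial inclusion is $\tau_{d_T}\subseteq\tau_{d_{max}}$: given a $d_T$-ball $N_r^{d_T}(\bar a)$ I need some $s>0$ with $N_s^{d_{max}}(\bar a)\subseteq N_r^{d_T}(\bar a)$. The map $s\mapsto s\star s\star\cdots\star s$ ($n$ copies) is continuous by \ref{t5} and vanishes at $0$ by \ref{t4}, so some $s>0$ makes this $n$-fold $\star$-power strictly smaller than $r$; monotonicity \ref{t3} then yields $d_T(\bar a,\bar x)\le s\star\cdots\star s<r$ for every $\bar x$ in the $d_{max}$-ball of radius $s$. This is the main obstacle, and it rests on continuity of $\star$ at the origin: because \ref{t3} is only non-strict monotonicity, one cannot propagate strict inequalities coordinatewise, so $s$ must be chosen in advance so that its iterated $\star$-power is already strictly below $r$. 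Applying the same argument at an arbitrary point of a $d_T$-ball (using Lemma \ref{open} to first find a smaller $d_T$-ball around that point) completes the proof.
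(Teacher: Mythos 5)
Your proposal is correct and follows essentially the same route as the paper: componentwise $\star$-triangle inequalities for $d_{max}$ and $d_T$, identification of $d_{max}$-balls with products of coordinate balls to match the product topology, and mutual refinement of $d_{max}$- and $d_T$-balls. If anything, your handling of the inclusion $\tau_{d_T}\subseteq\tau_{d_{max}}$ --- choosing $s$ in advance so that the $n$-fold $\star$-power of $s$ is strictly below $r$ --- is more careful than the paper's displayed chain $N_r^{max}(\bar{a})\subseteq N_{r\star r\star\cdots\star r}^{T}(\bar{a})$, which, since \ref{t3} is only non-strict, gives only $d_T\le r\star\cdots\star r$ and so is not literally an inclusion of open balls.
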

\begin{proof}
Obviously $d_{max}$ and $d_T$ satisfies the first two properties of $\star$-metric, namely
"identity of indiscernibles" and "symmetry".

For the $\star$-triangle inequality let
$\bar{x}, \bar{y}, \bar{z} \in M$. If
$\max_{1\le i\le n}d_i(x_i,y_i)=d_k(x_k, y_k)$ for some $1\le k\le n$,
then we have
\begin{eqnarray*}
d_{max}(\bar{x},\bar{y})&=&\max_{1\le i\le n}d_i(x_i,y_i)\\
&=&d_k(x_k, y_k)\\
&\le&d_k(x_k, z_k)\star d_k(z_k,y_k)\\
&\le&d_{max}(\bar{x},\bar{z})\star d_{max}(\bar{z},\bar{y}).
\end{eqnarray*}
The following argument shows that $d_T$ also admits the $\star$-triangle inequality.
\begin{equation*}
\begin{split}
d_T(\bar{x},\bar{y})&=d_1(x_1,y_1)\star d_2(x_2,y_2)\star...\star d_n(x_n,y_n)\\
&\le\big(d_1(x_1,z_1)\star d_1(z_1,y_1)\big)\star\big(d_2(x_2,z_2)\star d_2(z_2,y_2)\big)\star
...\star\big(d_n(x_n,z_n)\star d_n(z_n,y_n)\big)\\
&=\big(d_1(x_1,z_1)\star d_2(x_2,z_2)\star...\star d_n(x_n,z_n)\big)\star
\big(d_1(z_1,y_1)\star d_2(z_2,y_2)\star...\star d_n(z_n,y_n)\big)\\
&=d_T(\bar{x},\bar{z})\star d_T(\bar{z},\bar{y}).
\end{split}
\end{equation*}

For the latter argument, firstly note that the induced topology of $d_{max}$ on $M$ is as the same
as the induced topology of $d_T$ on $M$. Indeed, if we denote the elements of the
open ball base of induced topologies of $d_T$ and $d_{max}$ by
$N_r^T(\bar{a})$ and $N_r^{max}(\bar{a})$ respectively, then
\begin{center}
$N_r^T(\bar{a})\subseteq N_r^{max}(\bar{a})\subseteq N_{\underbrace{r\star r\star ...\star r}_{\text{n-times}}}^T(\bar{a})$.
\end{center}
Now, let $\mathfrak{B}$ be a basis for the product topology and
$B=\prod_{i=1}^n N_{r_i}(a_i)$ be an element of $\mathfrak{B}$ and
$\bar{x}\in B$. Since for each $1\le i\le n$, $x\in N_{r_i}(a_i)$ and
$N_{r_i}(a_i)$ is a $\star$-open set, there exists $\epsilon_i>0$ such that
$N_{\epsilon_i}(x_i)\subseteq N_{r_i}(a_i)$. Let $\epsilon=\min_{1\le i\le n}\{\epsilon_i\}$.
Obviously, $N_\epsilon^{max}(\bar{x})\subseteq B$.

On the other hand, if $\bar{x}\in N_r^{max}(\bar{x})$ then for each
$1\le i\le n$, $x_i\in N_r(a_i)$ and so there exists $\epsilon_i>0$ such that
$N_{\epsilon_i}(x_i)\subseteq N_r(a_i)$. Assuming $\epsilon=\min_{1\le i\le n}\{\epsilon_i\}$
we get $N_\epsilon(\bar{x})\subseteq N_r^{max}(\bar{a})$. So the induced topology of $d_{max}$
is as the same as the product topology on $M$.
\end{proof}


\end{document}